\newtheorem{theorem}{Theorem}[section]
\newtheorem{lemma}[theorem]{Lemma}
\newtheorem{proposition}[theorem]{Proposition}
\newtheorem{corollary}[theorem]{Corollary}
\newtheorem{result}[theorem]{Result}
\newtheorem{remark}[theorem]{Remark}
\newcommand{\A}{{\cal A}}
\newcommand{\B}{{\cal B}}
\DeclareMathOperator{\tw}{tw}
\date{}
\title{On the treewidth of generalized Kneser graphs}
\author{Klaus Metsch\thanks{Justus-Liebig-Universit\"{a}t, Mathematisches Institut,
Arndtstra{\ss}e 2, D-35392 Gie{\ss}en}}
\begin{document}
\maketitle

\begin{abstract}
The generalized Kneser graph $K(n,k,t)$ for integers $k>t>0$ and $n>2k-t$ is the graph whose vertices are the $k$-subsets of $\{1,\dots,n\}$ with two vertices adjacent if and only if  they share less than $t$ elements. We determine the treewidth of the generalized Kneser graphs $K(n,k,t)$ when $t\ge 2$ and $n$ is sufficiently large compared to $k$. The imposed bound on $n$ is a significant improvement of a previously known bound. One consequence of our result is the following. For each integer $c\ge 1$ there exists a constant $K(c)\ge 2c$ such that $k\ge K(c)$ implies for $t=k-c$ that
$$\tw(K(n,k,t))=\binom{n}{k}-\binom{n-t}{k-t}-1$$ if and only if $n\ge (t+1)(k+1-t)$ .

\end{abstract}

\textbf{Keywords:} generalized Kneser graph, treewidth

\textbf{MSC (2020):} 51D05

\section{Introduction}

In this paper a \emph{graph} $\Gamma$ is a pair $(X,E)$ where $X$ is a finite non-empty set and $E$ is a set of subsets of cardinality two of $X$. The elements of $X$ are called \emph{vertices} and the elements of $E$ are called \emph{edges}. We write $X=V(\Gamma)$. $\Gamma$ is called empty if it has no edges.

A \emph{tree decomposition} of a graph $\Gamma$ is a pair $(T,B)$ where $T$ is a tree and $B=(B_t:t\in V(T))$ is a collection of subsets $B_t$ of $V(\Gamma)$, indexed by the vertices of $T$, such that
\begin{enumerate}
\item every edge $\{u,v\}$ of $\Gamma$ is contained in $B_t$ for some $t\in V(T)$, and
\item for each $v\in V(\Gamma)$, the graph induced by $T$ on $\{t\in V(T)\mid v\in B_t\}$ is connected and non-empty.
\end{enumerate}
The \emph{width} of such a tree decomposition is the number $\max\{|B_t|-1\mid t\in V(T)\}$, and the \emph{treewidth} $\tw(\Gamma)$ of a graph $\Gamma$ is the smallest width of its tree decompositions. The treewidth of a graph measures how treelike a graph is. For example, the treewidth of a non-empty tree is one and the treewidth of a graph on $n$ vertices is at most $n-1$ with equality if and only if the graph is complete. There is a vast literature on the treewidth of graphs, see \cite{c3,c5,c6,c2,Liu&Ciao&Lu,c4,c1} for some recent ones,  and there are applications. A famous one is the use of treewidth by Robertson and Seymour \cite{Robertson&Seymour} in their minor theorem.

For integers $n,k,t$ with $k>t\ge 1$ and $n>2k-t$, the \emph{generalized Kneser graph} $K(n,k,t)$ is the graph whose vertices are the $k$-element subsets of the set $[n]:=\{1,\dots,n\}$ with two vertices $K$ and $K'$ adjacent if and only if $|K\cap K'|<t$. The condition $n>2k-t$ ensures that the graph is non-empty. If $t=1$, these graphs are called \emph{Kneser graph}s and are denoted by $K(n,k)$.

It was proved by Harvey and Wood \cite{Harvey&Wood}  that the treewidth of $K(n,k)$ is equal to $\binom{n}{k}-\binom{n-1}{k-1}-1$ for $n\ge 4k^2-4k+1$ and $k\ge 3$. More recently,  Liu,  Ciao and Lu proved the following.

\begin{theorem}[\cite{Liu&Ciao&Lu}]
For integers $n,k,t$ with $k>t\ge 2$ and
\begin{align}\label{Liubound}
n\ge 2(k-t)(t+1)\binom{k}{t}+k+t+1
\end{align}
we have $\tw(K(n,k,t))=\binom{n}{k}-\binom{n-t}{k-t}-1$.
\end{theorem}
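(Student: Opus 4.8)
The plan is to prove the matching bounds $\tw(K(n,k,t))\le N-\alpha-1\le\tw(K(n,k,t))$, where $N=\binom nk$ is the number of vertices and $\alpha=\binom{n-t}{k-t}$. The number $\alpha$ is meant to be the independence number of $K(n,k,t)$: a set of vertices is independent exactly when the corresponding $k$-sets pairwise meet in at least $t$ points, and by the Frankl--Wilson form of the Erd\H{o}s--Ko--Rado theorem the largest such $t$-intersecting family has size $\binom{n-t}{k-t}$ once $n$ is large, the extremal families being precisely the \emph{stars} $S_T=\{K:T\subseteq K\}$ over a fixed $t$-set $T$. I would first record this, together with a \emph{quantitative cross-intersecting} stability statement, in the numerical form forced by \eqref{Liubound}; this is the combinatorial input that the lower bound consumes and the reason a bound on $n$ is needed at all.

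The \emph{upper bound} is the easy direction and needs no largeness of $n$. Fix a $t$-set $T$, let $S=S_T$ (so $S$ is independent and $|S|=\alpha$) and $W=V\setminus S$, with $|W|=N-\alpha$. Since $S$ is independent, every neighbour of a vertex $s\in S$ lies in $W$; moreover each $s$ has a non-neighbour in $W$, namely any $k$-set obtained from $s$ by deleting one element of $T$ and adding a new point, which meets $s$ in $k-1\ge t$ points but omits $T$. Hence $\{s\}\cup N(s)$ has at most $|W|$ vertices. I would then take the star-shaped tree decomposition with centre bag $W$ and, for each $s\in S$, a leaf bag $\{s\}\cup N(s)$: every edge inside $W$ lies in the centre bag, every edge incident with some $s$ lies in its leaf bag, $S$ carries no edges, and each vertex occupies a connected set of bags. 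All bags have at most $N-\alpha$ vertices, so $\tw(K(n,k,t))\le N-\alpha-1$.

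For the \emph{lower bound} I would show that every tree decomposition $(T,B)$ has a bag with at least $N-\alpha$ vertices. Orient each edge of $T$ towards the side of its induced separation carrying at least as many vertices; since $T$ is a tree this orientation has a sink $t^\ast$. Put $B^\ast=B_{t^\ast}$ and, for each neighbour of $t^\ast$, let $R_i\subseteq V\setminus B^\ast$ collect the vertices appearing only on that branch. The $R_i$ partition $V\setminus B^\ast$, and for each $i$ the set $R_i$ is the no-larger side of a separation whose separator $C_i\subseteq B^\ast$ and whose opposite side $O_i=V\setminus(C_i\cup R_i)$ satisfies $|O_i|\ge\tfrac12(N-|C_i|)$. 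Assuming $|B^\ast|\le N-\alpha-1$, we get $|O_i|\ge\tfrac12(\alpha+1)$, which is far above the stability threshold, while by the separation property $R_i$ and $O_i$ are cross-$t$-intersecting. The quantitative stability statement then forces each non-empty $R_i$ to be contained in a single star $S_{T_i}$. But any two members of one $R_i\subseteq S_{T_i}$ share the $t$-set $T_i$, hence are non-adjacent, and any $R_i,R_j$ with $i\neq j$ are cross-$t$-intersecting, hence mutually non-adjacent; therefore $V\setminus B^\ast=\bigcup_iR_i$ is an \emph{independent set}. This gives $|V\setminus B^\ast|\le\alpha$, i.e.\ $|B^\ast|\ge N-\alpha$, contradicting the assumption and completing the lower bound.

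The whole argument thus reduces to one combinatorial fact, and I expect the genuine difficulty to lie exactly there: the quantitative cross-$t$-intersecting stability theorem asserting that a family which is cross-$t$-intersecting with some family of size at least $\tfrac12(\alpha+1)$ must be a sub-family of a single star. Proving this with a threshold small enough to be met under \eqref{Liubound} is the technical heart; it is presumably where shifting or spread-type approximation arguments enter, and sharpening this threshold is plausibly how the present paper widens the admissible range of $n$. A secondary point to handle carefully is the degenerate behaviour at the sink when some $R_i$ is very small (a single vertex trivially lies in a star, so nothing is lost there) and the bookkeeping guaranteeing that $O_i$ really exceeds the threshold for every non-empty branch simultaneously.
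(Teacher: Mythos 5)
Your overall architecture is sound and in fact parallels the route the paper takes: the star-shaped tree decomposition gives the upper bound exactly as in the paper (which packages it as $\tw(\Gamma)\le\max\{\Delta(\Gamma),|V(\Gamma)|-\alpha(\Gamma)-1\}$), and your sink-orientation argument on the decomposition tree is a correct in-line proof of the Robertson--Seymour separator theorem that the paper instead quotes as a black box; your bookkeeping with $R_i$, $C_i$, $O_i$ (they partition $V\setminus B^\ast$, satisfy $|O_i|\ge\tfrac12(N-|C_i|)$, and $R_i,O_i$ are cross-$t$-intersecting) is also right, as is the appeal to Wilson's theorem to cap the size of the resulting independent set. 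But there is a genuine gap, and it sits exactly where you placed all the weight: the ``quantitative cross-$t$-intersecting stability statement'' is never proved, only postulated. That statement is precisely what consumes the hypothesis \eqref{Liubound} and constitutes the actual content of the theorem; a proof that assumes it has assumed the hard part.

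Two concrete remarks on that missing lemma. First, you ask for more than you need and more than is convenient to prove: for your conclusion that $V\setminus B^\ast=\bigcup_i R_i$ is independent, it suffices that each $R_i$ be $t$-intersecting; containment in a single star $S_{T_i}$ is irrelevant, since pairwise intersections of size at least $t$ already give non-adjacency inside $R_i$, and cross-branch non-adjacency is free. The statement that is both sufficient and elementarily provable is the local contrapositive: if a family $\mathcal{R}$ contains two sets $A_1,A_2$ with $|A_1\cap A_2|<t$, then any family $\mathcal{O}$ cross-$t$-intersecting with $\mathcal{R}$ satisfies
\begin{align*}
|\mathcal{O}|\;\le\;\sum_{r=0}^{t-1}\binom{t-1}{r}\binom{k+1-t}{t-r}\binom{k-r}{t-r}\binom{n-2t+r}{k-2t+r},
\end{align*}
obtained by covering $\mathcal{O}$ by the families $\mathcal{O}_Y=\{B\in\mathcal{O}\mid Y\subseteq B\}$ over the $t$-subsets $Y$ of $A_1$ and noting that each $B\in\mathcal{O}_Y$ must still meet $A_2\setminus Y$ in at least $t-|Y\cap A_2|$ points (a monotonicity lemma in $r=|Y\cap A_2|$ tidies the sum). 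Second, even granting such a bound, you never verify the numerics: one must check that under \eqref{Liubound} the right-hand side above falls below your threshold $\tfrac12(\alpha+1)$, and this computation---not the tree-decomposition formalism---is where the lower bound on $n$ is actually used. Your stronger ``single star'' formulation is plausibly true in this range of $n$, but proving it would additionally require excluding $t$-intersecting non-star families (Hilton--Milner/Frankl-type configurations), which is extra work for no gain. Until the covering bound and the numerical verification are supplied, the lower-bound half of your argument is a reduction of the theorem to an unproved statement, not a proof.
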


The hard part in this theorem is to prove the lower bound for the treewidth. For the upper bound, the authors of \cite{Liu&Ciao&Lu} also used \eqref{Liubound} but in fact this holds in general, which is our first result.

\begin{theorem}\label{upperbound}
Let $n,k,t$ be positive integers with $n>2k-t$ and $k>t>0$.
\begin{enumerate}[\rm (a)]
\item  $\tw(K(n,k,t))\le \binom{n}{k}-\binom{n-t}{k-t}-1$.
\item If $n<(t+1)(k+1-t)$, then the bound in (a) is not tight.
\end{enumerate}
\end{theorem}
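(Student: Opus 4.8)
The plan is to deduce both parts from a single explicit tree decomposition built from a large independent set. The underlying principle is: if $\mathcal{F}$ is an independent set of any graph $\Gamma$ such that every $v\in\mathcal{F}$ has a non-neighbour in $R:=V(\Gamma)\setminus\mathcal{F}$, then $\tw(\Gamma)\le |R|-1$. To prove this I would use the star-shaped tree with one central node carrying the bag $B_0=R$ and, for each $v\in\mathcal{F}$, a pendant node carrying $B_v=N(v)\cup\{v\}$. Since $\mathcal{F}$ is independent, all neighbours of $v$ lie in $R$, so $B_0$ covers every edge inside $R$, each $B_v$ covers every edge at $v$, and there are no edges inside $\mathcal{F}$; the connectivity condition holds because each $r\in R$ lies in $B_0$ and in those pendants $B_v$ with $v\sim r$, a connected node set since every pendant meets the centre, while each $v$ lies only in $B_v$. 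The hypothesis gives $|N(v)|\le|R|-1$, so $|B_v|\le|R|=|B_0|$ and the width is $|R|-1$. Note that the naive path decomposition with bags $R\cup\{v\}$ only yields $|R|=|V(\Gamma)|-|\mathcal{F}|$; the extra $-1$ is exactly what the star decomposition buys, and it is where the non-neighbour condition is spent.

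The second ingredient is that this slack is always present in $K(n,k,t)$. I would prove: for $n>2k-t$, $k>t>0$, and any $k$-set $v$, the family $M(v)$ of $k$-sets meeting $v$ in at least $t$ points is not $t$-intersecting. Granting this, for any independent set $\mathcal{F}$ (that is, any $t$-intersecting family) and any $v\in\mathcal{F}$ there are $w_1,w_2\in M(v)$ with $|w_1\cap w_2|<t$; as $\mathcal{F}$ is $t$-intersecting it cannot contain both, and neither equals $v$ (else $|w_1\cap w_2|\ge t$), so the one outside $\mathcal{F}$ is a non-neighbour of $v$ in $R$. For part (a) I then take $\mathcal{F}=I$, the $\binom{n-t}{k-t}$ sets containing a fixed $t$-set, giving $|R|=\binom{n}{k}-\binom{n-t}{k-t}$ and hence (a).

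For part (b), when $n<(t+1)(k+1-t)$ I would replace $I$ by the Frankl family $\mathcal{A}=\{F:\ |F\cap[t+2]|\ge t+1\}$, which is $t$-intersecting since any two members share at least $(t+1)+(t+1)-(t+2)=t$ points of $[t+2]$. A two-step Pascal expansion yields $|\mathcal{A}|-\binom{n-t}{k-t}=t\binom{n-t-2}{k-t-1}-\binom{n-t-2}{k-t}$, and the ratio identity $\binom{m}{j}/\binom{m}{j-1}=(m-j+1)/j$ shows this is positive exactly when $n<(t+1)(k+1-t)$; thus $|\mathcal{A}|\ge\binom{n-t}{k-t}+1$. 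The decomposition (with its hypothesis supplied by the lemma) then gives $\tw(K(n,k,t))\le\binom{n}{k}-|\mathcal{A}|-1\le\binom{n}{k}-\binom{n-t}{k-t}-2$, strictly below the bound of (a). This threshold is the Erd\H{o}s--Ko--Rado value at which $\mathcal{A}$ stops beating the star $I$ in size.

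The main obstacle is the lemma of the second paragraph. The tempting choice $|w_i\cap v|=t$ can fail when $k<2t$, since then $w_1\cap v$ and $w_2\cap v$ are forced to overlap substantially inside $v$; the correct approach optimises simultaneously over $p=|w_1\cap v|$ and $q=|w_2\cap v|$, for which the least possible intersection is $\max(0,p+q-k)+\max(0,3k-p-q-n)$. A short case analysis shows its minimum over admissible $p,q$ is always strictly less than $t$, precisely because $n>2k-t$ and $t<k$; verifying that the extremal configuration is realisable, i.e. that there is enough room both inside and outside $v$, is the one place demanding care. Everything else is decomposition bookkeeping and the single binomial identity pinning the threshold to $(t+1)(k+1-t)$.
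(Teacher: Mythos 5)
Your proposal is correct, and it reaches the theorem by a genuinely different key lemma, although the skeleton matches the paper's: both proofs use a star-shaped tree decomposition centred at the complement $R$ of an independent family, and both use the same two families (the point pencil for (a); for (b) the family of $k$-sets meeting $[t+2]$ in at least $t+1$ points, with the same Pascal/ratio computation placing the threshold at $(t+1)(k+1-t)$). The difference is how the pendant bags are kept no larger than the central bag. The paper does this globally through the maximum degree: it counts three pairwise disjoint families of $k$-sets to obtain $\Delta(K(n,k,t))\le\binom{n}{k}-\binom{n-t}{k-t}-(k-t)t\binom{n-k}{k-t}$ and then invokes the Harvey--Wood bound $\tw(\Gamma)\le\max\{\Delta(\Gamma),|V(\Gamma)|-\alpha(\Gamma)-1\}$. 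You do it locally: you show that for $n>2k-t$, $k>t>0$, the family of $k$-sets meeting a fixed vertex $v$ in at least $t$ points is never $t$-intersecting, so every member of an independent family has a non-neighbour outside the family, and hence each pendant bag $N(v)\cup\{v\}$ has size at most $|R|$. Your route yields the clean general principle $\tw(\Gamma)\le|V(\Gamma)|-|\mathcal{F}|-1$ for any independent $\mathcal{F}$ all of whose members have non-neighbours outside $\mathcal{F}$, and it needs no degree computation at all; the paper's route avoids your case analysis (a two-line disjointness count suffices) and gives a quantitative gap in the degree bound rather than a bare existence statement. I checked your lemma and it is sound: the least realizable intersection is indeed $\max(0,p+q-k)+\max(0,3k-p-q-n)$, and taking $p=q=t$ if $k<2t$, and $p=t$, $q=k-t$ if $k\ge 2t$, makes it at most $\max\{0,\,2t-k,\,2k-n\}\le t-1$, with realizability guaranteed because $k-p\le n-k$ and $k-q\le n-k$ follow from $p,q\ge t$ and $n>2k-t$.

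One correction to your closing remark: the choice $|w_1\cap v|=|w_2\cap v|=t$ does \emph{not} fail when $k<2t$; there the forced overlap is $(2t-k)+\max(0,3k-2t-n)$, which equals either $2t-k$ or $2k-n$, both at most $t-1$. It fails when $k>2t$ and $n\le 3k-3t$, where the forced overlap \emph{outside} $v$, namely $3k-2t-n$, reaches $t$; that is exactly the regime in which one must enlarge $p+q$ to $k$. Since your plan explicitly optimises over both $p$ and $q$ with the correct formula, this is a misattribution in the motivation, not a gap in the argument.
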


The reason for (a) not to be tight when $n<(t+1)(k+1-t)$ is that in this situation the generalized Kneser graph has independent sets that are larger than the so called point pencils, see Section 2. Up to my knowledge, it is open problem whether equality holds in (a) for all $n\ge (t+1)(k+1-t)$. We can give an affirmative answer for some parameter sets as follows.

\begin{theorem}\label{main1}
For each integer $c\ge 1$, there exists an integer $K(c)$ such that $k\ge K(c)$ and $t=k-c$ implies that
\begin{align}\label{twbound}
\tw(K(n,k,k-c))=\binom{n}{k}-\binom{n-t}{k-t}-1
\end{align}
for all $n$ with $n\ge (t+1)(k+1-t)$.
\end{theorem}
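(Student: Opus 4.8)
By Theorem~\ref{upperbound}(a) the inequality $\tw(K(n,k,k-c))\le\binom{n}{k}-\binom{n-t}{k-t}-1$ holds with no restriction on $k$, so the whole content of Theorem~\ref{main1} is the matching lower bound. Writing $N=\binom{n}{k}$ and $\alpha=\binom{n-t}{k-t}$, I would prove the equivalent statement that every tree decomposition of $K(n,k,t)$ has a bag of size at least $N-\alpha$; by the min--max duality between treewidth and bramble order (Seymour--Thomas) this is the same as producing a bramble of order $N-\alpha$, and I would keep both formulations at hand.

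The first ingredient is that $N-\alpha$ really is $N$ minus the independence number. For $n\ge(t+1)(k+1-t)$ this is exactly the complete $t$-intersection theorem (conjectured by Frankl, proved by Ahlswede and Khachatrian): a $t$-intersecting family of $k$-subsets of $[n]$ has at most $\binom{n-t}{k-t}$ members, and at this threshold the point pencils are the extremal families. This is precisely why $(t+1)(k+1-t)$ is the correct bound and why Theorem~\ref{upperbound}(b) is sharp just below it. For the lower bound I will need the quantitative \emph{stability} companion: every $t$-intersecting family that is not concentrated on a single $t$-set is substantially smaller than $\alpha$. The role of the hypothesis ``$k$ large compared with $c$'' is to make this stability uniform and valid all the way down to the threshold; for $t=k-c$ the Ahlswede--Khachatrian configuration degenerates in a controlled way, so the relevant estimates can be arranged to hold once $k\ge K(c)$.

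For the lower bound itself I would argue by contradiction from a tree decomposition all of whose bags have size at most $N-\alpha-1$, so that every bag omits at least $\alpha+1$ vertices. The subtlety, and the reason a single separator does not suffice, is that the degree of every vertex is only about $N-\binom{k}{t}\alpha$, far below $N-\alpha$, so a single vertex can be cut off cheaply; the large bag we seek is therefore \emph{not} forced to be a balanced separator and the argument must be global. The plan is to root the tree and use the partition of the vertices according to the top node of each vertex's subtree: for every edge $\{K,K'\}$ the two endpoints lie together in the bag at the lower of their two top nodes, while vertices sitting in different branches below a fixed node are pairwise non-adjacent and hence form cross-$t$-intersecting families. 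Aggregating these local constraints over the whole tree, the stability input forces the branches below each node to be rigidly concentrated on $t$-sets; tracking how these concentrations must fit together, one shows that bags of size at most $N-\alpha-1$ cannot account for all $\binom{n}{k}$ vertices without producing a $t$-intersecting configuration that violates the stability bound. In bramble language the same facts would be used to assemble connected sets that pairwise touch yet are unavoidable in any collection of more than $\alpha$ vertices, giving a bramble of order $N-\alpha$.

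The step I expect to be the main obstacle is attaining the \emph{exact} value $N-\alpha$ at the tight threshold $n=(t+1)(k+1-t)$. There the extremal families are only barely unique and near-extremal $t$-intersecting families are abundant, so every soft tool overshoots: balanced-separator arguments permit the cheap one-vertex cut described above, the naive bramble built from all edges fails because $K(n,k,t)$ contains induced pairs of disjoint edges, and a clique minor leaving most vertices as singletons is impossible since those singletons would have to form a clique. What makes the argument go through is the sharp, $k$-uniform stability theorem for $t$-intersecting families at the threshold, together with a careful accounting showing that the non-adjacencies living inside the complement of a point pencil cannot accumulate enough to defeat the vertex count. Establishing this stability in a form valid for \emph{all} $n\ge(t+1)(k+1-t)$ once $k\ge K(c)$, and feeding it into the tree-decomposition bookkeeping, is the part I expect to require the most work, and is the reason the large-$k$ hypothesis is needed.
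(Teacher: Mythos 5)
Your proposal is not a proof but a plan, and its two essential ingredients are both missing. The quantitative stability theorem for $t$-intersecting families on which everything leans is never stated, proved, or cited in a usable form; and the step where you ``aggregate these local constraints over the whole tree'' and do ``a careful accounting'' is exactly where the mathematical content would have to live, yet you leave it as a hope and even flag it yourself as the part requiring the most work. After the (correct) reduction to the lower bound, nothing in the proposal is verifiable.

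More importantly, you dismiss the approach that actually works on incorrect grounds. You argue that balanced-separator arguments fail because a vertex $v$ has degree only about $\binom{n}{k}-\binom{k}{t}\binom{n-t}{k-t}$, so ``a single vertex can be cut off cheaply.'' But $X=N(v)$ is not a $p$-separator: the non-neighbors of $v$ (the $k$-sets meeting $v$ in at least $t$ points) number roughly $\binom{k}{t}\binom{n-t}{k-t}$ and, for the relevant $n$, form essentially one connected component of $\Gamma\setminus X$, which therefore contains far more than a $p$-fraction of $|V(\Gamma)\setminus X|$; the definition requires \emph{every} component to be small. The paper's proof is precisely a balanced-separator argument and needs no stability input: by Robertson--Seymour, if $\tw(K(n,k,t))<\binom{n}{k}-\binom{n-t}{k-t}-1$ there is a $\frac23$-separator $X$ with $|X|<\binom{n}{k}-\binom{n-t}{k-t}$; then $U=V\setminus X$ splits into unions of components $\A,\B$ with $|\A|,|\B|\ge\frac13|U|$; Wilson's theorem (only the exact independence number for $n\ge(t+1)(k+1-t)$, which is all the threshold hypothesis is used for) gives adjacent $A_1,A_2\in\A$; every $B\in\B$ must then meet both $A_1$ and $A_2$ in at least $t$ points, and an elementary double count over the $t$-subsets of $A_1$ bounds $|\B|$ by a sum which, for $t=k-c$ and $k\ge K(c)$, is less than $\frac13\binom{n-t}{k-t}<\frac13|U|$ --- a contradiction. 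Indeed, the explicit value of $K(c)$ in the paper is manufactured exactly so that this counting inequality fails. So the global machinery you call for (stability theorems, bramble duality, rooted-tree bookkeeping) is unnecessary, and your reason for abandoning the separator route rests on a misreading of what a $p$-separator is.
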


We also give $K(c)$ more explicitly in Corollary \ref{firstcorollary}. For general $k$ and $t$ we can improve the above result of Liu,  Ciao and Lu as follows.

\begin{theorem}\label{main2}
For integers $n,k,t$ with $k>t>1$ and
\begin{align}
n\ge \begin{cases}
        6k(k+1-t)(k-t) & \text{if $2\le t\le 16$,}
        \\
        \frac{t-1}{\ln(t)}k(k+1-t)(k-t) & \text{if $t\ge 17$}
    \end{cases}
\end{align}
we have
\begin{align*}
\tw(K(n,k,t))=\binom{n}{k}-\binom{n-t}{k-t}-1.
\end{align*}
\end{theorem}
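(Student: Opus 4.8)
The plan is to prove Theorem \ref{main2} by establishing the matching lower bound $\tw(K(n,k,t))\ge \binom{n}{k}-\binom{n-t}{k-t}-1$, since the upper bound is already furnished by Theorem \ref{upperbound}(a). The standard route to lower-bounding treewidth is through a \emph{bramble} or, more conveniently for these highly symmetric graphs, through the \emph{balanced separator} characterization: a graph $\Gamma$ has $\tw(\Gamma)\ge w$ whenever every set $S\subseteq V(\Gamma)$ with $|S|\le w$ fails to be a balanced separator, i.e.\ one cannot break $\Gamma$ into components each of size at most $\frac12(|V(\Gamma)|-|S|)$ by deleting $S$. Equivalently, and this is what I would use, one shows that no small separator can split off a large independent-set-avoiding chunk. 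The key structural fact driving the argument is that the maximum independent sets of $K(n,k,t)$, for $n$ past the Frankl--Wilson / Ahlswede--Khachatrian threshold $(t+1)(k+1-t)$, are exactly the \emph{point pencils}: all $k$-sets containing a fixed $t$-set. A point pencil has size $\binom{n-t}{k-t}$, so the independence number is $\alpha=\binom{n-t}{k-t}$, and the target quantity is precisely $\binom{n}{k}-\alpha-1=|V|-\alpha-1$.

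The core of the argument is the following separator estimate. First I would fix a hypothetical tree decomposition of width less than $|V|-\alpha-1$ and, by the standard averaging argument of Robertson--Seymour, extract a bag $B$ such that every component of $\Gamma-B$ has at most $\frac12(|V|-|B|)$ vertices. The aim is to derive a contradiction by showing $|B|\ge |V|-\alpha$. The heart of the matter is a clique-cover / independence argument: one shows that $V\setminus B$, being partitioned into components that are pairwise non-adjacent, together with the separating set, cannot avoid forcing $B$ to be large unless $V\setminus B$ is essentially contained in a single independent set. Concretely I would argue that if two vertices lie in distinct components of $\Gamma-B$ then they are non-adjacent in $\Gamma$, so each component is ``far'' in the complement; leveraging the strong expansion/connectivity of $K(n,k,t)$ for large $n$ one shows that at most one component can be large, and that large component's vertex set must itself be an independent set, hence of size at most $\alpha$. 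This yields $|V\setminus B|\le \alpha + (\text{small})$ and thus $|B|\ge |V|-\alpha-1$, contradicting the assumed width.

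The technical engine, and the step where the hypotheses on $n$ enter, is bounding the number of vertices outside the large independent set, i.e.\ controlling the ``small'' term above. This is an extremal set-theory estimate: one needs that any collection of $k$-sets that is \emph{not} contained in a point pencil but still induces a subgraph with small independence/separation number must be tiny, and the threshold at which this kicks in is governed by the size of the second-largest maximal independent families (near-pencils and Hilton--Milner-type configurations). I expect the regime split in the statement---$2\le t\le 16$ versus $t\ge 17$, with the factor $\frac{t-1}{\ln t}$ appearing for large $t$---to arise precisely here, from optimizing a bound on $\binom{n-t}{k-t}$ relative to $\binom{n}{k}$ and to the sizes of these sub-extremal families; the $\ln t$ almost certainly enters through an estimate of the form $(1-t/n)^{\cdots}$ or a ratio of binomial coefficients that is sharpened using $\ln(1+x)$-type inequalities.

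The main obstacle, I anticipate, is not the treewidth machinery but this sharp counting: one must push the classical Frankl--Wilson/Ahlswede--Khachatrian structure theory to a \emph{quantitative} statement about how quickly non-pencil families lose vertices, and do so with explicit constants good enough to achieve the stated cubic-in-$k$ bound on $n$ (a genuine improvement over the factorial-type bound \eqref{Liubound}). I would therefore concentrate the real work on a lemma of the shape ``any independent set not inside a point pencil has size at most $\binom{n-t}{k-t}-\binom{n-t-1}{k-t}+O(\text{lower order})$'' together with a counting bound on the total number of $k$-sets meeting the fixed $t$-set in fewer than $t$ points, and then feed these into the balanced-separator contradiction. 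The treewidth reduction itself should be routine once these extremal inputs are in hand.
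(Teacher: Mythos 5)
You have the right skeleton---the paper likewise combines the upper bound of Theorem \ref{upperbound}(a) with the Robertson--Seymour balanced-separator theorem and gets the lower bound by showing that no small separator can split $K(n,k,t)$ into balanced pieces---but the heart of your argument is flawed. You propose to show that ``at most one component can be large, and that large component's vertex set must itself be an independent set, hence of size at most $\alpha$.'' A component of $\Gamma-B$ with at least two vertices is connected, hence contains edges and cannot be an independent set; the claim is self-contradictory as stated. What is actually true, and what the paper (Lemma \ref{basicresultonseparator}) exploits, is the opposite configuration: group the components of $K(n,k,t)\setminus X$ into two unions $\A$ and $\B$ of comparable size, $(1-p)|U|\le |\A|,|\B|\le p|U|$ where $U=V\setminus X$. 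Since no edges join $\A$ to $\B$, every $A\in\A$ and $B\in\B$ satisfy $|A\cap B|\ge t$ (a cross-intersecting condition). If $|U|>\binom{n-t}{k-t}$, which by Wilson's theorem (Result \ref{Wilson}) exceeds $\alpha$, then one part, say $\A$, contains adjacent vertices $A_1,A_2$; then every $B\in\B$ must contain a $t$-subset of $A_1$ \emph{and} meet $A_2$ in at least $t$ points, and a direct count caps $|\B|$ by $\sum_{r=0}^{t-1}\binom{t-1}{r}\binom{k+1-t}{t-r}\binom{k-r}{t-r}\binom{n-2t+r}{k-2t+r}$, contradicting $|\B|\ge(1-p)|U|$ once $n$ satisfies the stated bound. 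Note this uses only the \emph{value} of $\alpha$, not the structure of maximum independent sets.

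There are two further gaps. First, even granting your structural claim, your plan concludes $|V\setminus B|\le\alpha+(\text{small})$, hence only $|B|\ge |V|-\alpha-(\text{small})$; that does not contradict a hypothetical width of $|V|-\alpha-2$. The contradiction needs the sharp conclusion $|V\setminus B|\le\alpha$, with no additive slack, which is exactly what the counting above delivers. Second, the extremal input you want---a quantitative Hilton--Milner/Ahlswede--Khachatrian stability bound for $t$-intersecting families that are not point pencils---is both unproved in your sketch and unnecessary; such stability near the threshold $n=(t+1)(k+1-t)$ is genuinely difficult, and the cross-intersecting count replaces it entirely. Relatedly, your guess for the origin of the two regimes is off: in the paper the factor $\frac{t-1}{\ln t}$ arises by setting $c=(t-1)k(k-t)(k+1-t)/(n-t)$, reducing the counting inequality to $\frac{t-1}{3}\le\sum_{s\ge 1} c^s/((s-1)!\,s!\,s!)$, and comparing that sum with $e^c$, which forces $c>\ln t$ for $t\ge 17$ (and $c>\frac16(t-1)$ for $2\le t\le 16$), contradicting the hypothesis on $n$.
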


The proofs in the present paper follows the lines of the proofs in \cite{Liu&Ciao&Lu} by improving and simplifying their arguments. In particular, our proof avoids results of the $t$-shadow of families of sets.

\section{Proof}

For every graph $\Gamma$ its maximum vertex degree is denoted by $\Delta(\Gamma)$ and is called its \emph{maximum degree}. The cardinality of a largest independent set of $\Gamma$ is denoted by $\alpha(\Gamma)$ and is called the \emph{independence number} of the graph. There is a connection between treewidth, maximum degree and independence number.

To see this, consider a graph $\Gamma$ and an independent set $A$ of $\Gamma$ of size $\alpha(\Gamma)$. Put $S=V(\Gamma)\setminus A$. Then $T:=(A\cup \{S\},\{\{a,S\}\mid a\in A\})$ is a tree (in fact a star) and defining $B_S=S$ and $B_a=\{a\}\cup \Gamma_1(a)$ for $a\in A$ results in the tree decomposition $(T,(B_i)_{i\in A\cup\{S\}})$ of $\Gamma$. Since $|B_S|=|S|=|V(\Gamma)|-\alpha(\Gamma)$ and $|B_a|\le \Delta(\Gamma)+1$, it follows that
\begin{align}\label{twboundwithDelta}
\tw(\Gamma)\le\max\{\Delta(\Gamma),|V(\Gamma)|-\alpha(\Gamma)-1\}.
\end{align}
This was proved in \cite{Harvey&Wood}.

We will apply this to generalized Kneser graphs. The independence number of the generalized Kneser graph $K(n,k,t)$ is at least $\binom{n-t}{k-t}$ since the $k$-element subsets of an $n$-set containing a given $t$-set is an independent set of this size. We now compare $\Delta(\Gamma)$ and $|V(\Gamma)|-\alpha(\Gamma)-1$ for generalized Kneser graphs $\Gamma$.

\begin{lemma}
For positive integers $n>k\ge t$ we have
\begin{align*}
\binom{n-t}{k-t}+(k-t)t\binom{n-k}{k-t}+\sum_{i=0}^{t-1}\binom{k}{i}\binom{n-k}{k-i}\le\binom{n}{k}.
\end{align*}
\end{lemma}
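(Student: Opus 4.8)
The plan is to give a purely combinatorial proof, reading every term as a count of $k$-subsets of $[n]$ that meet a fixed reference set in prescribed ways. First I would fix $K_0=[k]$ and sort all $k$-subsets $K$ according to $i:=|K\cap K_0|$: choosing the $i$ common points inside $K_0$ and the remaining $k-i$ points outside $K_0$ yields the Vandermonde-type identity
\[
\binom{n}{k}=\sum_{i=0}^{k}\binom{k}{i}\binom{n-k}{k-i},
\]
in which $\binom{k}{i}\binom{n-k}{k-i}$ is exactly the number of $K$ with $|K\cap K_0|=i$. Hence $\sum_{i=0}^{t-1}\binom{k}{i}\binom{n-k}{k-i}$ counts the sets with $|K\cap K_0|<t$, and the claimed inequality is equivalent to $\binom{n-t}{k-t}+(k-t)t\binom{n-k}{k-t}\le\sum_{i=t}^{k}\binom{k}{i}\binom{n-k}{k-i}$, whose right-hand side is precisely the number of $k$-sets $K$ with $|K\cap K_0|\ge t$. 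It therefore suffices to exhibit that many distinct such sets.

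Next I would fix the $t$-subset $T=\{1,\dots,t\}\subseteq K_0$ and build two families. Let family $\A$ consist of all $k$-sets $K\supseteq T$; there are $\binom{n-t}{k-t}$ of these, and each satisfies $|K\cap K_0|\ge|T|=t$. For family $\B$, consider the $t(k-t)$ sets $S=(T\setminus\{i\})\cup\{j\}$ with $i\in T$ and $j\in K_0\setminus T$; these are pairwise distinct $t$-subsets of $K_0$, all different from $T$. For each such $S$ I take every $k$-set $K$ with $K\cap K_0=S$, of which there are $\binom{n-k}{k-t}$ since the remaining $k-t$ points are chosen freely outside $K_0$. Each such $K$ has $|K\cap K_0|=t$, so it is counted on the right-hand side, and summing over the $t(k-t)$ admissible sets $S$ yields $(k-t)t\binom{n-k}{k-t}$ members of $\B$.

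The only point that needs care — and the main obstacle — is to verify that all of these sets are genuinely distinct, i.e.\ that $\A$ and the blocks of $\B$ are pairwise disjoint. The blocks of $\B$ are disjoint from one another because they are separated by the value $K\cap K_0=S$; a set of $\B$ lies outside $\A$ because its defining $S$ omits some $i\in T$, so the corresponding $K$ misses $i$ and cannot contain $T$; and a set of $\A$ meeting $K_0$ in exactly $t$ points must satisfy $K\cap K_0=T$, which is excluded from $\B$. Once this is confirmed, the $|\A|+|\B|=\binom{n-t}{k-t}+(k-t)t\binom{n-k}{k-t}$ sets all lie among the $\sum_{i=t}^{k}\binom{k}{i}\binom{n-k}{k-i}$ sets with $|K\cap K_0|\ge t$, which is exactly the asserted inequality. (In the degenerate case $k=t$ the set $K_0\setminus T$ is empty, so $\B$ is empty and the bound collapses to the identity obtained from Vandermonde, with equality.)
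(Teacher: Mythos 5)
Your proof is correct and is essentially the paper's own argument: the families $\A$ and $\B$ you construct are exactly the paper's sets $T_2$ (all $k$-sets containing $[t]$) and $T_3$ ($k$-sets meeting $[t]$ in exactly $t-1$ elements and $[k]\setminus[t]$ in exactly one), and your disjointness check matches the paper's observation that $T_1,T_2,T_3$ are mutually disjoint. The only cosmetic difference is that you pass through the Vandermonde identity to replace the sum $\sum_{i=0}^{t-1}\binom{k}{i}\binom{n-k}{k-i}$ by its complement, whereas the paper bounds the three disjoint families directly against $\binom{n}{k}$.
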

\begin{proof}
The sum is equal to the cardinality of the set $T_1$ that consists of all $k$-subsets of $[n]$ that have at most $t-1$ elements in $[k]$. Also $\binom{n-t}{k-t}$ is equal to the cardinality of the set $T_2$ that consists of all $k$-subsets of $[n]$ that contain $[t]$. Finally, $(k-t)t\binom{n-k}{k-t}$ is the number of $k$-subsets of $[n]$ that contain exactly $t-1$ elements of $[t]$ and one further element of $[k]$. As the three sets $T_i$ are mutually disjoint, we have that $|T_1|+|T_2|+|T_3|$ is at most the number $\binom{n}{k}$ of $k$-subsets of $[n]$.
%
%
\end{proof}

\begin{corollary}\label{corDelta}
For a generalized Kneser graph $\Gamma=K(n,k,t)$ with $n>2k-t$ and $k>t>0$ we have
\begin{align*}
\Delta(\Gamma)\le |V(\Gamma)|-\binom{n-t}{k-t}-(k-t)t\binom{n-k}{k-t}.
\end{align*}
\end{corollary}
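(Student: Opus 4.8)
The plan is to read the maximum degree of $\Gamma=K(n,k,t)$ directly off the Lemma. First I would note that the symmetric group on $[n]$ acts transitively on the $k$-subsets of $[n]$ and preserves the relation $|K\cap K'|<t$, so $\Gamma$ is vertex-transitive. Consequently every vertex has the same degree, and $\Delta(\Gamma)$ equals the degree of the single convenient vertex $[k]=\{1,\dots,k\}$.

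Next I would identify this degree with a quantity already counted in the Lemma. A $k$-subset $K'$ of $[n]$ is adjacent to $[k]$ precisely when $|K'\cap[k]|<t$, that is, when $K'$ has at most $t-1$ elements in $[k]$; the vertex $[k]$ itself is excluded automatically, since $|[k]\cap[k]|=k\ge t$. But this is exactly the description of the set $T_1$ from the proof of the Lemma, the set of all $k$-subsets of $[n]$ with at most $t-1$ elements in $[k]$. Hence the degree of $[k]$ equals $|T_1|$, and therefore $\Delta(\Gamma)=|T_1|$.

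Finally I would invoke the Lemma itself. With $T_2$ and $T_3$ the two further sets from that proof, so that $|T_2|=\binom{n-t}{k-t}$ and $|T_3|=(k-t)t\binom{n-k}{k-t}$, the Lemma asserts $|T_1|+|T_2|+|T_3|\le\binom{n}{k}=|V(\Gamma)|$. Rearranging gives
$$\Delta(\Gamma)=|T_1|\le |V(\Gamma)|-\binom{n-t}{k-t}-(k-t)t\binom{n-k}{k-t},$$
which is precisely the claimed inequality. There is no genuine obstacle here: the combinatorial content is entirely carried by the disjointness of $T_1,T_2,T_3$ established in the Lemma, and the only points to verify are the bookkeeping identification of the neighbourhood of a vertex with $T_1$ and the remark that vertex-transitivity allows $\Delta(\Gamma)$ to be computed at the single vertex $[k]$.
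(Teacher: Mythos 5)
Your proposal is correct and follows essentially the same route as the paper: both identify $\Delta(\Gamma)$ with the number of $k$-subsets meeting a fixed vertex in at most $t-1$ elements (the set $T_1$, whose cardinality is the sum $\sum_{i=0}^{t-1}\binom{k}{i}\binom{n-k}{k-i}$ appearing in the Lemma) and then rearrange the Lemma's inequality. Your explicit appeal to vertex-transitivity is a slightly more careful statement of what the paper leaves implicit in the phrase ``for every $k$-subset $K$,'' but it is not a different argument.
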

\begin{proof}
We have $\Delta(\Gamma)=\sum_{i=0}^{t-1}\binom{k}{i}\binom{n-k}{k-i}$. In fact for every $k$-subset $K$ of $[n]$, this is the number of $k$-subsets of $[n]$ that meet $K$ in at most $t-1$ elements. The statement follows therefore from the lemma.
\end{proof}

\begin{proposition}\label{upperbound}
Let $n,k,t$ be positive integers with $n>2k-t$ and $k>t>0$.
\begin{enumerate}[\rm (a)]
\item  $\tw(K(n,k,t))\le \binom{n}{k}-\binom{n-t}{k-t}-1$.
\item $n<(t+1)(k+1-t)$, then the bound in (a) is not tight.
\end{enumerate}
\end{proposition}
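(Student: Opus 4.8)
My plan is to derive both parts from the inequality $\tw(\Gamma)\le\max\{\Delta(\Gamma),|V(\Gamma)|-\alpha(\Gamma)-1\}$ recorded in \eqref{twboundwithDelta}, feeding in the point-pencil bound $\alpha(\Gamma)\ge\binom{n-t}{k-t}$ and the degree estimate of Corollary \ref{corDelta}. For part (a) I would bound each of the two quantities in the maximum separately. Since $\alpha(\Gamma)\ge\binom{n-t}{k-t}$, the term $|V(\Gamma)|-\alpha(\Gamma)-1$ is at most $\binom{n}{k}-\binom{n-t}{k-t}-1$. For the degree term, Corollary \ref{corDelta} gives $\Delta(\Gamma)\le|V(\Gamma)|-\binom{n-t}{k-t}-(k-t)t\binom{n-k}{k-t}$, so it suffices to check $(k-t)t\binom{n-k}{k-t}\ge1$; this is immediate, as $k-t\ge1$, $t\ge1$, and $n>2k-t$ forces $n-k\ge k-t$, whence $\binom{n-k}{k-t}\ge1$. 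Taking the maximum of the two bounds gives (a).

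For part (b) the idea is to exhibit a $t$-intersecting family (an independent set) strictly larger than a point pencil and then to push \emph{both} terms of \eqref{twboundwithDelta} strictly below the bound of (a). First I would observe that for $t=1$ the hypothesis $n<(t+1)(k+1-t)=2k$ contradicts $n>2k-t=2k-1$, so (b) is vacuous there and I may assume $t\ge2$. I then take $\A$ to be the family of all $k$-subsets $F$ of $[n]$ with $|F\cap[t+2]|\ge t+1$. Any two members each meet $[t+2]$ in at least $t+1$ points, hence share at least $(t+1)+(t+1)-(t+2)=t$ points, so $\A$ is independent. Its cardinality is $\binom{n-t-2}{k-t-2}+(t+2)\binom{n-t-2}{k-t-1}$, and expanding $\binom{n-t}{k-t}$ twice by Pascal's rule yields the clean identity
\[
|\A|-\binom{n-t}{k-t}=t\binom{n-t-2}{k-t-1}-\binom{n-t-2}{k-t}.
\]
Using $\binom{n-t-2}{k-t}\big/\binom{n-t-2}{k-t-1}=(n-k-1)/(k-t)$, this difference is positive exactly when $t(k-t)>n-k-1$, i.e.\ when $n<t(k-t)+k+1=(t+1)(k+1-t)$, which is precisely our hypothesis. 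Hence $\alpha(\Gamma)\ge|\A|\ge\binom{n-t}{k-t}+1$.

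To conclude, I combine this improved independence number with a sharpened degree estimate. As $t\ge2$, $k-t\ge1$, and $\binom{n-k}{k-t}\ge1$, Corollary \ref{corDelta} now gives $\Delta(\Gamma)\le|V(\Gamma)|-\binom{n-t}{k-t}-2$, while $\alpha(\Gamma)\ge\binom{n-t}{k-t}+1$ gives $|V(\Gamma)|-\alpha(\Gamma)-1\le|V(\Gamma)|-\binom{n-t}{k-t}-2$. Thus both terms in \eqref{twboundwithDelta} are at most $\binom{n}{k}-\binom{n-t}{k-t}-2$, so $\tw(\Gamma)$ is strictly below the bound in (a). The step I expect to be the main obstacle is not the bookkeeping with \eqref{twboundwithDelta} and Corollary \ref{corDelta}, but verifying that the size comparison $|\A|>\binom{n-t}{k-t}$ flips sign exactly at the threshold $(t+1)(k+1-t)$; the delicate part there is the algebraic identity above, which must align the Pascal expansion of $\binom{n-t}{k-t}$ with the two-layer structure of $\A$ so that the crossover matches the stated bound on $n$.
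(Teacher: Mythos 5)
Your proof is correct and follows essentially the same route as the paper: part (a) via \eqref{twboundwithDelta} together with Corollary \ref{corDelta}, and part (b) via the same independent set $\A$ of $k$-subsets meeting $[t+2]$ in at least $t+1$ elements. The only difference is that you carry out in full the size comparison $|\A|>\binom{n-t}{k-t}$ (including the observation that the crossover occurs exactly at $n=(t+1)(k+1-t)$), which the paper dismisses as ``easy to see.''
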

\begin{proof}
Put $\Gamma=K(n,k,t)$. We will apply \eqref{twboundwithDelta}. Since $n>2k-t$ and $k>t>0$ we have $\Delta(\Gamma)\le \binom{n}{k}-\binom{n-t}{k-t}-2$ from Corollary \ref{corDelta}.

We have $|V(K(n,k,t))|=\binom{n}{k}$ and $\alpha(\Gamma)\ge \binom{n-t}{k-t}$, as noticed above. Hence
$|V(\Gamma)|-\alpha(\Gamma)\le \binom{n}{k}-\binom{n-t}{k-t}$. Thus (a) follows from \eqref{twboundwithDelta}.

Now suppose that $n<(t+1)(k+1-t)$. Let $A$ be the set consisting of all $k$-subsets of $[n]$ that have at least $t+1$ elements in $[t+2]$. This is an independent set of $\Gamma$ and thus $\alpha(\Gamma)\ge|A|$. Also, since $n<(t+1)(k-1-t)$, it is easy to see that $|A|>\binom{n-t}{k-t}$. Hence $|V(\Gamma)|-\alpha(\Gamma)<\binom{n}{k}-\binom{n-t}{k-t}$, and thus also (b) follows from \eqref{twboundwithDelta}.
\end{proof}

It was proved in \cite{Liu&Ciao&Lu} that the upper bound is sharp when $n$ is sufficiently large compared to $k$ and $t$. We will improve this result by weakening the required bound on $n$ significantly. As in \cite{Liu&Ciao&Lu} and \cite{Harvey&Wood} we use a result of Robertson and Seymour on separators. For a real number $p$ with $\frac23\le p<1$ a \emph{$p$-separator} of a graph $\Gamma$ is subset $X$ of the vertex set $V(\Gamma)$ of $\Gamma$ such that every component of $\Gamma\setminus X$ has at most $p|V(\Gamma)\setminus X)|$ vertices.

\begin{result}[\cite{Robertson&Seymour}]\label{Robertson&Seymour}
Let $\Gamma$ be a finite graph and $p$ a real number with $\frac23\le p<1$. Then $\Gamma$ has a $p$-separator $X$ with $|X|\le \tw(\Gamma) + 1$.
\end{result}

A second ingredient of our proof is the result of Wilson on the independence number of generalized Kneser graphs.

\begin{result}[\cite{Wilson}]\label{Wilson}
For integers $k>t\ge 1$ and $n\ge (t+1)(k+1-t)$, we have $\alpha(K(n,k,t))=\binom{n-t}{k-t}$.
\end{result}

As we have already noticed previously the independence number is larger when $n<(t+1)(k+1-t)$.

\begin{lemma}\label{lemma}
Suppose $n,k,t$ are positive integers with $k>t$ and $n\ge t+\frac12(k+1-t)(k-t)$. Define the function
\[
f:\{r\in\mathbb{Z}\mid 0\le r\le t-1\}\to\mathbb{R},\ f(r):=\binom{k-r}{t-r} \binom{n-2t+r}{k-2t+r}.
\]
Then $f$ is monotone increasing.
\end{lemma}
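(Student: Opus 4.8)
The plan is to prove monotonicity stepwise, i.e. to establish $f(r+1)\ge f(r)$ for every $r$ with $0\le r\le t-2$, by analysing the quotient $f(r+1)/f(r)$. First I would dispose of the degenerate range. The factor $\binom{k-r}{t-r}$ is a genuine positive binomial coefficient throughout the domain, since $0\le t-r\le k-r$. The second factor $\binom{n-2t+r}{k-2t+r}$, however, vanishes exactly when $k-2t+r<0$, which can occur for small $r$ when $k<2t$. On this initial stretch $f(r)=0$, and since every value of $f$ is nonnegative we have $f(r+1)\ge 0=f(r)$ there, so monotonicity is automatic; it then remains to treat the indices $r$ with $f(r)>0$, equivalently $k-2t+r\ge 0$.

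For such $r$ both $f(r)$ and $f(r+1)$ are positive, and the standard one-step ratios of binomial coefficients give
\[
\frac{\binom{k-r-1}{t-r-1}}{\binom{k-r}{t-r}}=\frac{t-r}{k-r},\qquad \frac{\binom{n-2t+r+1}{k-2t+r+1}}{\binom{n-2t+r}{k-2t+r}}=\frac{n-2t+r+1}{k-2t+r+1},
\]
hence
\[
\frac{f(r+1)}{f(r)}=\frac{(t-r)(n-2t+r+1)}{(k-r)(k-2t+r+1)}.
\]
All four factors here are positive (note $k-2t+r+1\ge 1$ and $t-r\ge 2$), so the desired inequality $f(r+1)\ge f(r)$ is equivalent to the linear-in-$n$ condition
\[
n\ge 2t-r-1+\frac{(k-r)(k-2t+r+1)}{t-r}.
\]

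The crucial point, which I expect to be the heart of the argument, is that the right-hand side simplifies remarkably. Writing $d=k-t$ and $u=t-r$, one has $k-r=d+u$, $k-2t+r+1=d+1-u$ and $2t-r-1=t+u-1$, and since $(d+u)(d+1-u)=d(d+1)+u-u^2$ the whole expression collapses to
\[
2t-r-1+\frac{(k-r)(k-2t+r+1)}{t-r}=t+\frac{(k-t)(k+1-t)}{t-r}.
\]
Because $r\le t-2$ we have $t-r\ge 2$, so this bound is largest when $t-r=2$ and never exceeds $t+\tfrac12(k-t)(k+1-t)$, which is exactly the hypothesis imposed on $n$. Thus $n\ge t+\tfrac12(k+1-t)(k-t)$ guarantees $f(r+1)\ge f(r)$ for every admissible $r$, and $f$ is monotone increasing. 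The only subtlety to keep an eye on is the bookkeeping for the vanishing second factor; once that case is separated off, the remaining steps are a direct ratio computation followed by the algebraic collapse displayed above.
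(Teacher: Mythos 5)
Your proof is correct and follows essentially the same route as the paper: both reduce $f(r)\le f(r+1)$ to the cross-multiplied inequality $(k-r)(k-2t+r+1)\le (n-2t+r+1)(t-r)$, simplify it to $(n-t)(t-r)\ge (k-t)(k+1-t)$ (equivalently $n\ge t+\frac{(k-t)(k+1-t)}{t-r}$), and conclude from $t-r\ge 2$ together with the hypothesis on $n$. Your separate treatment of the indices where $\binom{n-2t+r}{k-2t+r}=0$ (possible when $k<2t$) is a welcome point of care that the paper's terse ``it is easy to see'' glosses over, but it does not change the substance of the argument.
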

\begin{proof}
For integers $r$ with $0\le r\le t-2$, it is easy to see that $f(r)\le f(r+1)$ is equivalent to
\[
(k-r)(k-2t+r+1)\le (n-2t+r+1)(t-r).
\]
This in turn can be written as $(n-t)(t-r)\ge (k-t)(k+1-t)$.
In view of the assumed lower bound on $n$ and $r\le t-2$, this is true.
\end{proof}

\begin{lemma}\label{basicresultonseparator}
Suppose $n,k,t$ are integers with $k>t>0$ and $p$ is a real number with $\frac23\le p<1$ such that
    \begin{align*}
    n&\ge (t+1)(k+1-t),\\
    n&\ge t+\frac12(k+1-t)(k-t),\ \text{and}\\
    (1-p)\binom{n-t}{k-t}&\ge \sum_{s=1}^{t}\binom{t-1}{s-1}\binom{k+1-t}{s}\binom{k-t+s}{s} \binom{n-t-s}{k-t-s}.
    \end{align*}
Then every $p$-separator of $K(n,k,t)$ has at least $\binom{n}{k}-\binom{n-t}{k-t}$ elements.
\end{lemma}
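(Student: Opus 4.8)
The plan is to reformulate the conclusion as $|Y|\le\binom{n-t}{k-t}$, where $\Gamma:=K(n,k,t)$, $X$ is the given $p$-separator and $Y:=V(\Gamma)\setminus X$; this is equivalent to the asserted bound on $|X|$ since $|X|=\binom{n}{k}-|Y|$. I argue by contradiction, assuming $|Y|>\binom{n-t}{k-t}$. Let $C_1,\dots,C_m$ be the components of $\Gamma\setminus X$, ordered so that $|C_1|\ge\dots\ge|C_m|$. Since $|C_1|\le p|Y|<|Y|$ we have $m\ge2$, and $C_1$ is not a single vertex: otherwise $Y$ would be an independent set and Result \ref{Wilson} (available by the first hypothesis) would force $|Y|\le\binom{n-t}{k-t}$. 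Hence $C_1$ contains an edge, i.e.\ two $k$-sets $K_1,K_2$ with $r:=|K_1\cap K_2|\le t-1$.

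The governing structural fact is that vertices in distinct components are non-adjacent, hence meet in at least $t$ points. Put $B:=Y\setminus C_1$. Then every $K\in B$ satisfies $|K\cap K_1|\ge t$ and $|K\cap K_2|\ge t$, while $|B|=|Y|-|C_1|\ge(1-p)|Y|>(1-p)\binom{n-t}{k-t}$. Thus it suffices to prove that $|B|$ is at most the right-hand side of the third hypothesis, since that contradicts the third hypothesis and closes the argument.

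To bound $|B|$ I would attach to each $K\in B$ a small \emph{core}. Choose a $t$-set $T_2\subseteq K\cap K_2$ and extend $T_2\cap K_1$ to a $t$-set $T_1\subseteq K\cap K_1$ (possible since $|K\cap K_1|\ge t$). Then $W:=T_1\cup T_2\subseteq K$ with $T_2\cap K_1\subseteq T_1$, so $T_1\cap T_2=T_2\cap K_1\cap K_2$ and $|W|=2t-|T_1\cap T_2|=t+s$ for some $s\ge t-r\ge1$. The cores of size $t+s$ arising this way are obtained by picking the $(t-s)$-set $T_1\cap T_2$ inside $K_1\cap K_2$, the $s$ elements of $T_1\setminus T_2$ among the remaining $k-t+s$ elements of $K_1$, and the $s$ elements of $T_2\setminus T_1$ among the $k-r$ elements of $K_2\setminus K_1$; this gives $\binom{r}{t-s}\binom{k-t+s}{s}\binom{k-r}{s}$ cores, each contained in $\binom{n-t-s}{k-t-s}$ of the $k$-sets. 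Since every $K\in B$ contains at least one core, a union bound yields
\[
|B|\le\sum_{s=t-r}^{t}\binom{r}{t-s}\binom{k-t+s}{s}\binom{k-r}{s}\binom{n-t-s}{k-t-s}=:G(r).
\]
At $r=t-1$ the identities $\binom{t-1}{t-s}=\binom{t-1}{s-1}$ and $\binom{k-r}{s}=\binom{k+1-t}{s}$ show that $G(t-1)$ is precisely the right-hand side of the third hypothesis.

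It remains to prove $G(r)\le G(t-1)$ for all $0\le r\le t-1$. Writing $\rho=t-s$ and using the function $f$ of Lemma \ref{lemma}, one checks $\binom{k-t+s}{s}\binom{n-t-s}{k-t-s}=f(\rho)$, whence $G(r)=\sum_{\rho=0}^{r}\binom{r}{\rho}\binom{k-r}{t-\rho}f(\rho)$, and by Vandermonde the weights $\binom{r}{\rho}\binom{k-r}{t-\rho}$ sum to $\binom{k}{t}$ for every $r$. Thus $G(r)/\binom{k}{t}$ is the average of $f$ against a hypergeometric law whose mass moves toward larger $\rho$ as $r$ increases, and since $f$ is monotone increasing by Lemma \ref{lemma} --- which is exactly where the hypothesis $n\ge t+\tfrac12(k+1-t)(k-t)$ enters --- the quantity $G(r)$ is non-decreasing in $r$, giving $G(r)\le G(t-1)$ and the desired contradiction. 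I expect this final monotonicity to be the main obstacle: the stochastic-dominance picture must be made rigorous, presumably by establishing $G(r)\le G(r+1)$ directly through an Abel-type rearrangement that pairs consecutive weights and invokes the monotonicity of $f$. By contrast, the core count is routine once the canonical extension defining $(T_1,T_2)$ is fixed so as to avoid overcounting.
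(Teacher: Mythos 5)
Your proposal is essentially the paper's proof: find an edge $K_1K_2$ inside one part of the separated graph (the paper balances the components into two unions $\A,\B$ with $(1-p)|U|\le|\A|,|\B|\le p|U|$ and places the edge in $\A$; you use the largest component and its complement, which works equally well), then bound the opposite part by a union bound over cores $T_1\cup T_2$ with $T_1\subseteq K\cap K_1$, $T_2\subseteq K\cap K_2$ --- the paper's version sums over $t$-subsets $Y\subseteq K_1$ contained in $K$ and $(t-\rho)$-subsets $Z\subseteq K_2\setminus Y$ contained in $K$, which after regrouping gives exactly your $G(r)$ --- and finally reduce to the worst case $r=t-1$ using the monotonicity of $f$ from Lemma \ref{lemma}.

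The one step you flag as the ``main obstacle'' is in fact immediate, and your own Vandermonde computation already contains the rigorous argument. Writing $S:=K_1\cap K_2$, your identity says
\begin{align*}
G(r)=\sum_{\rho=0}^{r}\binom{r}{\rho}\binom{k-r}{t-\rho}f(\rho)=\sum_{Y\in\binom{K_1}{t}}f(|Y\cap S|),
\end{align*}
since $\binom{r}{\rho}\binom{k-r}{t-\rho}$ counts the $t$-subsets $Y$ of $K_1$ with $|Y\cap S|=\rho$. Now fix any $(t-1)$-set $T$ with $S\subseteq T\subseteq K_1$, which exists because $|S|=r\le t-1$. Then $|Y\cap S|\le|Y\cap T|$ for every $Y$, so Lemma \ref{lemma} gives the termwise inequality $f(|Y\cap S|)\le f(|Y\cap T|)$, and summing over $Y$ yields $G(r)\le\sum_{Y}f(|Y\cap T|)=G(t-1)$. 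No Abel-type rearrangement or stochastic-dominance machinery is needed; this termwise comparison is precisely how the paper applies Lemma \ref{lemma}, and with it your argument is complete.
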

\begin{proof}
Assume on the contrary that there exits a $p$-separator $X$ with $|X|<\binom{n}{k}-\binom{n-t}{k-t}$. Then $U:=V(K(n,k,t))\setminus X$ satisfies
$|U|>\binom{n-t}{k-t}$. The components of $K(n,k,t)\setminus X$ have each at most $p|U|$ elements, and hence there is a union $\A$ of components with $(1-p)|U|\le |\A|\le p|U|$ (this is clear if some component has at least $(1-p)|U|$ vertices and otherwise it follows from $p\ge 2(1-p)$). Let $\B=U\setminus\A$ be the union of the remaining components, so that also $(1-p)|U|\le |\B|\le p|U|$. As $\A$ and $\B$ are unions of components of the graph $K(n,k,t)\setminus X$, we have $|A\cap B|\ge t$ for all $A\in\A$ and $B\in\B$.

Since $n\ge (t+1)(k+1-t)$, Results \ref{Wilson} shows that $|U|>\alpha(K(n,k,t))$. Hence $U$ contains adjacent vertices, that is there exists two elements $A_1,A_2\in U$ that intersect in less than $t$ elements. We may assume that $A_1,A_2\in A$. Put $s=|A_1\cap A_2|<t$. For every $t$-subset $Y$ of $A_1$ we put $\B_Y:=\{B\in\B\mid Y\subseteq B\}$. The following observation is from \cite{Liu&Ciao&Lu}.

Claim: If $Y$ is a $t$-subset of $A_1$ and if $r:=|Y\cap A_2|$, then $|\B_Y|\le f(r)$.
\\
This can be seen as follows. If $B\in\B_Y$, then $|A_2\cap B|\ge t$ and hence $|(A_2\setminus Y)\cap B|\ge t-r$. On the other hand, for each of the $\binom{k-r}{t-r}$ subsets $Z$ of size $t-r$ of $A_2\setminus Y$, there are exactly $\binom{n-2t+r}{k-2t+r}$ $k$-subsets of $[n]$ that contain $Z\cup Y$. It follows that $|\B_Y|\le \binom{k-r}{t-r}\binom{n-2t+r}{k-2t+r}=f(r)$ establishing the claim.

Define $S:=A_1\cap A_2$, so that $s=|S|\le t-1$. Since $|A_1\cap B|\ge t$ for all $B\in\B$ we have
\begin{align}\label{Bistkleinergleich}
\bigcup_{Y\in{A _1\choose t}}\B_Y=\B.
\end{align}
Fix any subset $T$ of $A_1$ with $S\subseteq T$ and $|T|=t-1$. Then
\begin{align*}
|\B|&\le \sum_{Y\in{A _1\choose t}}|\B_Y|\le \sum_{Y\in{A _1\choose t}}f(|Y\cap S|)
\\
&\le \sum_{Y\in{A _1\choose t}}f(|Y\cap T|)
\le \sum_{r=0}^{t-1}\binom{t-1}{r}\binom{k+1-t}{t-r}f(r)
\end{align*}
where the first inequality follows from \eqref{Bistkleinergleich}, the second from the above claim, the third from Lemma \ref{lemma},
and the fourth by counting for $0\le r\le t-1$ how many $t$-subsets of $A_1$ share exactly $r$ elements with $T$.
Using $|\B|\ge (1-p)|U|$ and $|U|>\binom{n-t}{k-t}$ we find
\begin{align}\label{eqnsderf}
(1-p)\binom{n-t}{k-t}<\sum_{r=0}^{t-1}\binom{t-1}{r}\binom{k+1-t}{t-r}\binom{k-r}{t-r} \binom{n-2t+r}{k-2t+r}.
\end{align}
If we substitute $r=t-s$ the resulting inequality contradicts the hypothesis of the present lemma.
\end{proof}

\begin{theorem}\label{cor1}
Suppose $n,k,t$ are integers with $k>t>0$ such that
\begin{align}
n&\ge (t+1)(k+1-t),\label{eqns1}
\\
n&\ge t+\frac12(k+1-t)(k-t),\ \text{and}\label{eqns2}
\\
\frac13\binom{n-t}{k-t}&\ge \sum_{s=1}^{\min\{t,k-t\}}\binom{t-1}{s-1}\binom{k+1-t}{s}\binom{k-t+s}{s} \binom{n-t-s}{k-t-s}.\label{eqns3}
\end{align}
Then $\tw(K(n,k,t))=\binom{n}{k}-\binom{n-t}{k-t}-1$.
\end{theorem}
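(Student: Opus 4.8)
The plan is to obtain the stated equality by combining the already-established upper bound with a matching lower bound coming from the separator machinery. Since Proposition \ref{upperbound}(a) gives $\tw(K(n,k,t))\le \binom{n}{k}-\binom{n-t}{k-t}-1$ for free, the entire task reduces to proving the reverse inequality, and this theorem is in essence a packaging step that feeds the hypotheses \eqref{eqns1}--\eqref{eqns3} into Lemma \ref{basicresultonseparator}.

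First I would specialize the separator parameter to $p=\frac23$. This is the smallest value permitted in Result \ref{Robertson&Seymour}, and it makes the coefficient $1-p=\frac13$ as large as possible, which is exactly what is needed to satisfy the third hypothesis of Lemma \ref{basicresultonseparator} with the least restrictive demand on $n$. With this choice I would check the three hypotheses of that lemma against \eqref{eqns1}--\eqref{eqns3}. The first two hypotheses coincide verbatim with \eqref{eqns1} and \eqref{eqns2}. For the third, inserting $p=\frac23$ turns its left-hand side into $\frac13\binom{n-t}{k-t}$, matching \eqref{eqns3}; the only apparent discrepancy is that Lemma \ref{basicresultonseparator} sums $s$ from $1$ to $t$ whereas \eqref{eqns3} sums only up to $\min\{t,k-t\}$. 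I would resolve this by noting that for $s>k-t$ the factor $\binom{n-t-s}{k-t-s}$ has a negative lower index and therefore vanishes, so the two sums are in fact identical and the hypothesis is met.

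With all three hypotheses verified, Lemma \ref{basicresultonseparator} yields that every $\frac23$-separator of $\Gamma:=K(n,k,t)$ has at least $\binom{n}{k}-\binom{n-t}{k-t}$ vertices. On the other hand, Result \ref{Robertson&Seymour} guarantees the existence of a $\frac23$-separator $X$ with $|X|\le\tw(\Gamma)+1$. Chaining these two facts gives
\[
\tw(\Gamma)+1\ \ge\ |X|\ \ge\ \binom{n}{k}-\binom{n-t}{k-t},
\]
which rearranges to the desired lower bound $\tw(\Gamma)\ge\binom{n}{k}-\binom{n-t}{k-t}-1$. Together with Proposition \ref{upperbound}(a) this forces the two bounds to coincide, completing the proof.

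I expect no genuine obstacle here, since the substantive work was already carried out in Lemma \ref{basicresultonseparator}. The single point requiring explicit care is the identification of the two summation ranges via the vanishing binomial terms, together with the (elementary) observation that $p=\frac23$ is the optimal admissible choice; everything else is a direct application of the two cited results.
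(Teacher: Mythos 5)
Your proposal is correct and follows essentially the same route as the paper: apply Lemma \ref{basicresultonseparator} with $p=\tfrac23$, combine it with Result \ref{Robertson&Seymour} to get the lower bound $\tw(K(n,k,t))\ge\binom{n}{k}-\binom{n-t}{k-t}-1$, and invoke Proposition \ref{upperbound}(a) for the matching upper bound. Your explicit remark that the two summation ranges agree because $\binom{n-t-s}{k-t-s}=0$ for $s>k-t$ is a detail the paper leaves implicit, and it is handled correctly.
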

\begin{proof}
By Lemma \ref{basicresultonseparator} every $\frac23$-separator of $K(n,k,t)$ has at least $\binom{n}{k}-\binom{n-t}{k-t}$ elements. Result \ref{Robertson&Seymour} implies thus that $\tw(K(n,k,t))\ge \binom{n}{k}-\binom{n-t}{k-t}-1$. Proposition \ref{upperbound} gives equality.
\end{proof}

It remains to analyze inequality \eqref{eqns3}. If $k-t$ is sufficiently small, we find the following.

\begin{corollary}\label{firstcorollary}
For integers $c\ge 1$ define
\begin{align*}
K(c)=c-1+3\sum_{s=1}^{c}\binom{c-1}{s-1}\binom{c+1}{s}\binom{c+s}{s}\frac{1}{c^{s-1}}.
\end{align*}
Then for all integers $k$ and $t$ with $k\ge K(c)$ and $t=k-c$ we have
\begin{align}\label{twbound}
\tw(K(n,k,k-c))=\binom{n}{k}-\binom{n-t}{k-t}-1
\end{align}
for all $n$ with $n\ge (t+1)(k+1-t)$.
\end{corollary}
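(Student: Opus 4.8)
The plan is to derive this from Theorem~\ref{cor1} by checking its three hypotheses. Throughout I would write $c=k-t$, so that $k+1-t=c+1$, $k-t=c$, and the binomial coefficients in \eqref{eqns1}--\eqref{eqns3} simplify. A preliminary observation is that $K(c)\ge 2c$: already the summand $s=1$ in the definition of $K(c)$ contributes $3(c+1)^2$, whence $K(c)\ge 3c^2+7c+2\ge 2c$. Consequently $t=k-c\ge K(c)-c\ge c$, so $\min\{t,k-t\}=c$ and the sum in \eqref{eqns3} runs over $1\le s\le c$. It thus suffices to verify \eqref{eqns1}, \eqref{eqns2} and \eqref{eqns3} whenever $k\ge K(c)$, $t=k-c$ and $n\ge(t+1)(c+1)$.

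The first two conditions are easy. Condition \eqref{eqns1} is literally the hypothesis $n\ge(t+1)(k+1-t)$. For \eqref{eqns2} I would compare against \eqref{eqns1}: since $(t+1)(c+1)-t=tc+c+1$, it is enough to check $tc+c+1\ge\frac12 c(c+1)$, which is immediate from $t\ge c$.

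The core of the proof is \eqref{eqns3}. Writing $m:=n-t$ and dividing the inequality by $\binom{m}{c}$, the claim becomes
\[
\sum_{s=1}^{c}\binom{t-1}{s-1}\binom{c+1}{s}\binom{c+s}{s}\frac{\binom{m-s}{c-s}}{\binom{m}{c}}\le\frac13 .
\]
The idea is to bound each summand so the whole sum is governed by exactly the sum occurring in the definition of $K(c)$. Using $\binom{m-s}{c-s}/\binom{m}{c}=\prod_{i=0}^{s-1}\frac{c-i}{m-i}$ together with $\binom{t-1}{s-1}/\binom{c-1}{s-1}=\prod_{i=1}^{s-1}\frac{t-i}{c-i}$, the factors $(c-i)$ cancel and one obtains the clean identity
\[
\binom{t-1}{s-1}\frac{\binom{m-s}{c-s}}{\binom{m}{c}}=\binom{c-1}{s-1}\frac{c}{m}\prod_{i=1}^{s-1}\frac{t-i}{m-i}.
\]
From $n\ge(t+1)(c+1)$ one gets $m\ge c(t+1)$, hence $\frac{c}{m}\le\frac1{t+1}$ and, for each $1\le i\le s-1$, $\frac{t-i}{m-i}\le\frac1c$ (this last inequality is equivalent to $m-i\ge c(t-i)$, which follows from $m\ge c(t+1)$ and $c\ge1$). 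Therefore each summand is at most $\binom{c-1}{s-1}\binom{c+1}{s}\binom{c+s}{s}\frac{1}{(t+1)c^{s-1}}$, and summing over $s$ yields the upper bound $\frac1{t+1}\sum_{s=1}^{c}\binom{c-1}{s-1}\binom{c+1}{s}\binom{c+s}{s}c^{1-s}$. Finally, the definition of $K(c)$ rearranges precisely to $t+1\ge 3\sum_{s=1}^{c}\binom{c-1}{s-1}\binom{c+1}{s}\binom{c+s}{s}c^{1-s}$, so this bound is at most $\frac{1}{t+1}\cdot\frac{t+1}{3}=\frac13$, establishing \eqref{eqns3} and hence the corollary via Theorem~\ref{cor1}.

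The main obstacle is the per-summand reduction: one must spot that the $t$-dependent ratio $\binom{t-1}{s-1}\binom{m-s}{c-s}/\binom{m}{c}$ telescopes into a single factor $c/m\le 1/(t+1)$ together with $s-1$ factors each at most $1/c$, leaving behind exactly the $t$-free quantity $\binom{c-1}{s-1}c^{1-s}$ that appears in $K(c)$. Once this rearrangement is found, matching it against the definition of $K(c)$ and collecting the constant $1/3$ is routine.
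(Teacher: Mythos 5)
Your proof is correct and follows essentially the same route as the paper: both apply Theorem~\ref{cor1}, dispose of \eqref{eqns1} and \eqref{eqns2} trivially, and verify \eqref{eqns3} via the same estimates, namely $n-t\ge c(t+1)$ giving the factor $\frac{1}{t+1}$, the per-factor bound $\frac{t-i}{n-t-i}\le\frac1c$ giving $c^{1-s}$, and the observation that $k\ge K(c)$ is exactly $t+1\ge 3\sum_{s}\binom{c-1}{s-1}\binom{c+1}{s}\binom{c+s}{s}c^{1-s}$. The only difference is organizational: you verify \eqref{eqns3} directly, while the paper assumes the conclusion fails and runs the identical computation in reverse to reach the contradiction $k<K(c)$.
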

\begin{proof}
Consider integers $c,k,t,n$ with $c\ge 1$, $k\ge K(c)$, $t=k-c$ and
\begin{align}\label{eqnboundn}
n\ge (t+1)(k+1-t)=(t+1)(c+1).
\end{align}
Since $K(c)>2c$, then $k>2c$ and $t=k-c\ge c+1\ge 2$. Also \eqref{eqnboundn} implies $n>2k-t$, so that $n,k,t$ are parameters of a generalized Kneser graph $K(n,k,t)$. As $k>2c$, then \eqref{eqnboundn} implies \eqref{eqns2}. Assume that \eqref{twbound} is not true. Then Theorem \ref{cor1} shows that
\begin{align}
\frac13\binom{n-t}{c}&<\sum_{s=1}^{c}\binom{t-1}{s-1}\binom{c+1}{s}\binom{c+s}{s} \binom{n-t-s}{c-s}
\\
\Rightarrow
\frac13\frac{(n-t)!}{c!}&\le \sum_{s=1}^{c}\frac{(t-1)!}{(t-s)!(s-1)!}\binom{c+1}{s}\binom{c+s}{s}\frac{(n-t-s)!}{(c-s)!}
\\
\Rightarrow
n-t&\le \sum_{s=1}^{c}\frac{3c!}{(c-s)!(s-1)!}\binom{c+1}{s}\binom{c+s}{s}\frac{(t-1)!}{(t-s)!}\frac{(n-t-s)!}{(n-t-1)!}
\label{eqnfinal1}
\end{align}
From \eqref{eqnboundn} we find $n-t\ge t$ and $n-t-1\ge (t+1)c$. This proves the inequalities in
\begin{align*}
\frac{(t-1)!}{(t-s)!}\frac{(n-t-s)!}{(n-t-1)!}=\prod_{i=1}^{s-1}\frac{t-i}{n-t-i}\le \frac{(t-1)^{s-1}}{(n-t-1)^{s-1}}
\le \frac{1}{c^{s-1}}.
\end{align*}
On the left hand side of \eqref{eqnfinal1} we use that $n-t>(t+1)c=(k+1-c)c$ and find
\begin{align}
k+1-c&<\sum_{s=1}^{c}\frac{3(c-1)!}{(c-s)!(s-1)!}\binom{c+1}{s}\binom{c+s}{s}\frac{1}{c^{s-1}}
\end{align}
and hence $k<K(c)$. Since we assumed $k\ge K(c)$, this is a contradiction coming from the assumption that \eqref{twbound} is not true. Therefore \eqref{twbound} is true.
\end{proof}

Corollary \ref{firstcorollary} and Proposition \ref{upperbound} prove Theorem \ref{main1}.

\begin{remark}\rm
The value $K(c)$ was chosen in such a way that \eqref{eqnfinal1} is not satisfied. For fixed small $c$ one can determine $K'(c)$ explicitly such that \eqref{eqnfinal1} is not satisfied iff $k\ge K'(c)$ and hence \eqref{twbound} is satisfied for all  $n,k,t$ with $k\ge K'(c)$, $t=k-c$ and $n\ge (t+1)(k+1-t)$. For example one finds $K'(1)=12$, $K'(2)=54$, $K'(3)=195$ and $K'(4)=626$. For $c=1$, a better result was proved in \cite{Liu&Ciao&Lu}.
\end{remark}

\begin{corollary}\label{lastcor}
Suppose $n,k,t$ are integers with $k>t>1$ and
\begin{align}\label{hyponerder}
n\ge \begin{cases}
     t+\frac{1}{\ln(t)}(t-1)k(k+1-t)(k-t) & \text{if $t\ge 17$}
     \\
          t+6k(k+1-t)(k-t) & \text{if $2\le t\le 16$}
     \end{cases}
\end{align}
Then $\tw(K(n,k,t))=\binom{n}{k}-\binom{n-t}{k-t}-1$.
\end{corollary}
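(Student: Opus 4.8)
The plan is to derive Corollary \ref{lastcor} from Theorem \ref{cor1} by checking that the lower bound \eqref{hyponerder} on $n$ forces the three hypotheses \eqref{eqns1}, \eqref{eqns2} and \eqref{eqns3}. Throughout I write $c:=k-t\ge 1$, so that $k=c+t$ and \eqref{hyponerder} takes the uniform shape $n-t\ge Ck(c+1)c$, where $C=6$ when $2\le t\le 16$ and $C=(t-1)/\ln t$ when $t\ge 17$; in both ranges $C\ge 1$. Since $k\ge t+1$ and $c\ge 1$ give $Ckc\ge t+1$, and since $Ck\ge\tfrac12$, it follows that $n-t\ge (t+1)(c+1)$ and $n-t\ge\tfrac12(c+1)c$, hence \eqref{eqns1} and \eqref{eqns2} hold (and in particular $n>2k-t$). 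Thus the whole difficulty is \eqref{eqns3}.

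The first real step is to reduce \eqref{eqns3} to an inequality free of $n$ and $c$. Dividing by $\binom{n-t}{k-t}$ turns \eqref{eqns3} into $\sum_s r_s\le\tfrac13$, where $r_s$ is the $s$-th summand divided by $\binom{n-t}{k-t}$. From $\binom{n-t-s}{k-t-s}/\binom{n-t}{k-t}=\prod_{i=0}^{s-1}\frac{c-i}{n-t-i}\le\bigl(c/(n-t)\bigr)^s$, together with $\binom{c+1}{s}\le (c+1)^s/s!$ and $\binom{c+s}{s}\le (c+s)^s/s!$, and then $c/(n-t)\le 1/(Ck(c+1))$, the factor $(c+1)^s$ cancels and one is left with $r_s\le\binom{t-1}{s-1}\frac{(c+s)^s}{(s!)^2(Ck)^s}$. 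The key point is that $s\le\min\{t,c\}\le t$, so $(c+s)^s\le (c+t)^s=k^s$, which cancels the last power of $k$ and yields $r_s\le\binom{t-1}{s-1}/\bigl((s!)^2C^s\bigr)$, a bound depending only on $t$ and $C$.

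It then remains to prove $\sum_{s\ge1}\binom{t-1}{s-1}/\bigl((s!)^2C^s\bigr)\le\tfrac13$ in each regime. For $2\le t\le 16$ (so $C=6$) this is a finite sum; since $\binom{t-1}{s-1}$ increases with $t$, it is largest at $t=16$, and I would simply add up its first handful of terms there to see that the total (about $0.285$) is below $\tfrac13$. For $t\ge 17$ (so $C=(t-1)/\ln t$) I would apply $\binom{t-1}{s-1}\le (t-1)^{s-1}/(s-1)!$ and the identity $(t-1)/C=\ln t$; after reindexing $j=s-1$ this gives $\sum_s r_s\le\frac{\ln t}{t-1}\,\Phi(\ln t)$ with $\Phi(x):=\sum_{j\ge0}x^j/\bigl(j!\,((j+1)!)^2\bigr)$.

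The main obstacle is this last inequality $\frac{\ln t}{t-1}\Phi(\ln t)\le\tfrac13$ for every $t\ge 17$, because it is essentially tight at the endpoint $t=17$ (there the left side is about $0.323$). The super-exponential decay of the coefficients of $\Phi$ controls its growth: from $(j+1)!\ge 2^j$ one gets $((j+1)!)^2\ge 4^j$ and hence $\Phi(x)\le e^{x/4}$, so $\Phi(\ln t)\le t^{1/4}$. For $t\ge 21$ this reduces the claim to the elementary one-variable inequality $3t^{1/4}\ln t\le t-1$, which I would verify by a short monotonicity argument; the finitely many remaining values $17\le t\le 20$ I would settle by evaluating $\Phi(\ln t)$ directly (a few terms plus a geometric tail controlled via the ratio $x/((j+1)(j+2)^2)$ of consecutive coefficients). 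Once \eqref{eqns3} is confirmed in both regimes, all hypotheses of Theorem \ref{cor1} hold and it delivers $\tw(K(n,k,t))=\binom{n}{k}-\binom{n-t}{k-t}-1$, finishing the proof.
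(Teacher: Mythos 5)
Your proposal is correct and follows essentially the same route as the paper: both deduce the result from Theorem \ref{cor1}, bound the sum in \eqref{eqns3} with the identical estimates $\binom{n-t-s}{k-t-s}/\binom{n-t}{k-t}\le\bigl(\tfrac{k-t}{n-t}\bigr)^s$, $\binom{a}{b}\le a^b/b!$ and the key cancellation $k-t+s\le k$, and then reduce to a one-variable inequality in $t$ settled by an exponential tail bound for large $t$ plus finitely many numerical checks. The differences (direct verification vs.\ the paper's contrapositive, your $\Phi(x)\le e^{x/4}$ vs.\ the paper's term-splitting bound $c+\tfrac{c^2}{4}+\tfrac1{12}(e^c-1)$, and your single monotone check at $t=16$ vs.\ the paper's case-by-case checks for $2\le t\le 16$) are only bookkeeping variations.
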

\begin{proof}
Define \begin{align}\label{def_c}
c:=\frac{(t-1)k(k-t)(k+1-t)}{n-t}.
\end{align}
Assume the statement is wrong. Then Theorem \ref{cor1} shows that
\begin{align}\label{eqcor2a}
\frac13\binom{n-t}{k-t}\le \sum_{s=1}^{t}\binom{t-1}{s-1}\binom{k+1-t}{s}\binom{k-t+s}{s} \binom{n-t-s}{k-t-s}.
\end{align}
For every integer $s$ with $1\le s\le t$ we have
\begin{align*}
\binom{n-t-s}{k-t-s}&=\binom{n-t}{k-t}\prod_{i=0}^{s-1}\frac{k-t-i}{n-t-i}\le \left(\frac{k-t}{n-t}\right)^s\binom{n-t}{k-t}.
\end{align*}
Using this and $\binom{a}{b}\le a^b/b!$ for binomial coefficients with $a,b\ge 0$ in \eqref{eqcor2a}, we find \begin{align*}
\frac13&\le \sum_{s=1}^{t}\binom{t-1}{s-1}\binom{k+1-t}{s}\binom{k-t+s}{s}\left(\frac{k-t}{n-t}\right)^s
\\
&\le \sum_{s=1}^{t}\frac{(t-1)^s(k+1-t)^s(k-t+s)^s}{(t-1)(s-1)!s!s!}\cdot   \left(\frac{k-t}{n-t}\right)^s.
\end{align*}
Using the definition of $c$ in \eqref{def_c}, this implies that
\begin{align}\label{eqcor2c}
\frac{t-1}{3}
&\le \sum_{s=1}^{t}\frac{c^s}{(s-1)!s!s!}.
\end{align}

Case 1. We have $2\le t\le 16$. Then one can check for each possible value of $t$ that \eqref{eqcor2c} implies that $c>\frac16(t-1)$. Using \eqref{def_c}, this contradicts \eqref{hyponerder}.

Case 2. We have $t\ge 17$. By hypotheses and \eqref{def_c} we then have $c\le\ln(t)$ and therefore
\begin{align*}
\frac{t-1}{3}&\le \sum_{s=1}^{t}\frac{c^s}{(s-1)!s!s!}
\\
&\le c+\frac{c^2}{4}+\frac{1}{2!3!}\sum_{s=3}^\infty\frac{c^s}{s!}
\\
&\le c+\frac{c^2}{4}+\frac{1}{2!3!}(e^c-1)
\\
&\le \ln t +\frac14(\ln t)^2+\frac1{12}(t-1).
\end{align*}
This implies that
\begin{align*}
t-1&\le4\ln t +(\ln t)^2.
\end{align*}
For $t=24$ and hence for all $t\ge 24$, this is a contradiction.
For $17\le t\le 23$ one can check easily that \eqref{eqcor2c} implies that $c>\ln(t)$, which is a contradiction.
\end{proof}

Corollary \ref{lastcor} proves Theorem \ref{main2}.


\end{document}